\documentclass[10pt]{article}


\usepackage{amsmath,amsbsy,amssymb,latexsym,amsthm,dsfont}
\usepackage{a4wide}

\newtheorem{Theorem}{Theorem}

\newtheorem{Lemma}{Lemma}


\newenvironment{keywords}{\begin{center}
\begin{minipage}[c]{13.2cm} {\bf Keywords:}} {\end{minipage}
\end{center}}

\newenvironment{msc}{\begin{center}
\begin{minipage}[c]{13.2cm} {\bf Mathematics Subject Classification 2010:}} {\end{minipage}
\end{center}}

\newenvironment{pacs}{\begin{center}
\begin{minipage}[c]{13.2cm} {\bf PACS:}} {\end{minipage}
\end{center}}


\begin{document}

\title{A Fractional Calculus of Variations for Multiple Integrals 
with Application to Vibrating String\thanks{Accepted for publication 
in the \emph{Journal of Mathematical Physics} (14/January/2010).}}

\author{Ricardo Almeida$^1$\\
\texttt{ricardo.almeida@ua.pt}
\and Agnieszka B. Malinowska$^{1, 2}$\\
\texttt{abmalinowska@ua.pt}
\and Delfim F. M. Torres$^1$\\
\texttt{delfim@ua.pt}}

\date{$^1$Department of Mathematics,
University of Aveiro,
3810-193 Aveiro, Portugal\\[0.3cm]
$^2$Faculty of Computer Science,
Bia{\l}ystok University of Technology,\\
15-351 Bia\l ystok, Poland}

\maketitle


\begin{abstract}
We introduce a fractional theory of the calculus of variations
for multiple integrals. Our approach uses the recent
notions of Riemann-Liouville fractional derivatives
and integrals in the sense of Jumarie. Main results provide
fractional versions of the theorems of Green and  Gauss,
fractional Euler-Lagrange equations,
and  fractional natural boundary conditions.
As an application we discuss the fractional equation
of motion of a vibrating string.
\end{abstract}

\begin{pacs}
45.10.Db; 45.10.Hj; 02.30.Xx.
\end{pacs}

\begin{msc}
49K10; 26A33; 26B20.
\end{msc}

\begin{keywords}
fractional calculus; modified Riemann-Liouville derivatives and integrals;
fractional calculus of variations; multiple integrals;
fractional theorems of Green, Gauss, and Stokes;
fractional Euler-Lagrange equations.
\end{keywords}


\section{Introduction}

The Fractional Calculus (FC) is one of the most interdisciplinary
fields of mathematics, with many applications in physics and engineering.
The history of FC goes back more than three centuries,
when in 1695 the derivative of order $\alpha=1/2$ was
described by Leibniz. Since then, many
different forms of fractional operators were introduced:
the Grunwald-Letnikov, Riemann-Liouville, Riesz, and Caputo
fractional derivatives \cite{Kilbas,Podlubny,samko},
and the more recent notions of Klimek \cite{Klimek},
Cresson \cite{Cresson}, and Jumarie \cite{Jumarie1,Jumarie2,Jumarie4,Jumarie3}.

FC is nowadays the realm of physicists and mathematicians, who investigate
the usefulness of such non-integer order derivatives and integrals
in different areas of physics and mathematics \cite{Carpinteri,Hilfer,Kilbas}.
It is a successful tool for describing complex quantum field dynamical systems,
dissipation, and long-range phenomena that cannot be well
illustrated using ordinary differential and integral operators
\cite{El-Nabulsi:Torres:2008,Hilfer,Klimek,rie}.
Applications of FC are found, \textrm{e.g.},
in classical and quantum mechanics, field theories,
variational calculus, and optimal control
\cite{El-Nabulsi:Torres:2007,Frederico:Torres2,Jumarie4}.

Although FC is an old mathematical discipline, the fractional vector calculus
is at the very beginning. We mention the recent paper \cite{Tarasov},
where some fractional versions of the classical results of Green, Stokes
and Gauss are obtained via Riemann-Liouville and Caputo fractional operators.
For the purposes of a multidimensional Fractional Calculus of Variations (FCV),
the Jumarie fractional integral and derivative seems, however,
to be more appropriate.

The FCV started in 1996 with the work of Riewe \cite{rie}. Riewe formulated the problem of the calculus
of variations with fractional derivatives and obtained the
respective Euler-Lagrange equations, combining both conservative and
nonconservative cases. Nowadays the FCV is a subject under strong research.
Different definitions for fractional derivatives and integrals
are used, depending on the purpose under study.
Investigations cover problems
depending on Riemann-Liouville fractional derivatives (see, \textrm{e.g.},
\cite{Atanackovic,El-Nabulsi:Torres:2008,Frederico:Torres1}), the Caputo
fractional derivative (see, \textrm{e.g.}, \cite{AGRA,Baleanu1,Baleanu2}),
the symmetric fractional derivative (see, \textrm{e.g.}, \cite{Klimek}), the
Jumarie fractional derivative (see, \textrm{e.g.}, \cite{Almeida,Jumarie4}),
and others \cite{Ric:Del:09,Cresson,El-Nabulsi:Torres:2007}.
For applications of the fractional calculus of variations we refer the reader to
\cite{Dreisigmeyer1,Dreisigmeyer2,El-Nabulsi:Torres:2008,Jumarie4,Klimek,Rabei2,Rabei1,Stanislavskya}.
Although the literature of FCV is already vast, much remains to be done.

Knowing the importance and relevance of multidimensional problems of the calculus of variations
in physics and engineering \cite{Weinstock},
it is at a first view surprising that a multidimensional FCV is a completely
open research area. We are only aware of some preliminary results presented
in \cite{El-Nabulsi:Torres:2008}, where it is claimed
that an appropriate fractional variational theory involving multiple integrals would have
important consequences in mechanical problems involving dissipative systems
with infinitely many degrees of freedom, but where a formal theory for that is missing.
There is, however, a good reason for such omission in the literature:
most of the best well-known fractional operators are not suitable
for a generalization of the FCV to the multidimensional case,
due to lack of good properties, \textrm{e.g.},
an appropriate Leibniz rule.

The main aim of the present work is to introduce a fractional calculus
of variations for multiple integrals. For that we make use
of the recent Jumarie fractional integral and derivative
\cite{Jumarie1,Jumarie2,Jumarie3}, extending such notions to the multidimensional case.
The main advantage of using Jumarie's approach lies in the following facts:
the Leibniz rule for the Jumarie fractional derivative is equal to the standard one
and, as we show, the fractional generalization of some fundamental multidimensional
theorems of calculus is possible. We mention that
Jumarie's approach is also useful for the one-dimensional FCV,
as recently shown in \cite{Almeida} (see also \cite{ref}).

The plan of the paper is as follows. In Section~\ref{sec:Prel} some basic
formulas of Jumarie's fractional calculus are briefly
reviewed. Then, in Section~\ref{sec:IT}, the differential and integral
vector operators are introduced, and fractional Green's, Gauss's and
Stokes' theorems formulated. Section~\ref{sec:NecOpt} is devoted
to the study of problems of fractional calculus of variations with
multiple integrals. Our main results provide Euler-Lagrange
necessary optimality type conditions for such problems
(Theorems~\ref{Theorem1} and \ref{Theorem3}) as well as
natural boundary conditions (Theorem~\ref{Theorem2}). We end with
Section~\ref{app} of applications and future perspectives.


\section{Preliminaries}
\label{sec:Prel}

For an introduction to the classical fractional calculus
we refer the reader to \cite{Kilbas,miller,Podlubny,samko}.
In this section we briefly review the main notions and results
from the recent fractional calculus proposed by Jumarie \cite{Jumarie2,Jumarie4,Jumarie3}.
Let $f:[0,1]\to\mathbb R$ be a continuous function and
$\alpha\in(0,1)$. The Jumarie fractional derivative of $f$
may be defined by
\begin{equation}
\label{eq:def:fd:Jumarie}
f^{(\alpha)}(x)=\frac{1}{\Gamma(1-\alpha)}\frac{d}{dx}\int_0^x(x-t)^{-\alpha}(f(t)-f(0))\,dt.
\end{equation}
One can obtain \eqref{eq:def:fd:Jumarie}
as a consequence of a more basic definition, a local one,
in terms of a fractional finite difference
(\textrm{cf.} equation (2.2) of \cite{Jumarie3}):
$$
f^{(\alpha)}(x) = \lim_{h\rightarrow 0}\frac{1}{h^\alpha}
\sum_{k=0}^{\infty} (-1)^k {\alpha \choose k} f\left(x+(\alpha-k) h\right)\, .
$$
Note that the Jumarie and the Riemann-Liouville fractional
derivatives are equal if $f(0)=0$. The advantage of definition \eqref{eq:def:fd:Jumarie}
with respect to the classical definition of Riemann-Liouville
is that the fractional derivative of a constant is now zero, as desired.
An anti-derivative of $f$, called the $(dt)^\alpha$
integral of $f$, is defined by
$$\int_0^xf(t)(dt)^\alpha=\alpha\int_0^x(x-t)^{\alpha-1}f(t)dt.$$
The following equalities can be considered
as fractional counterparts of the first and the second fundamental
theorems of calculus and can be found in \cite{Jumarie4,Jumarie3}:
\begin{equation}
\label{FTC1}
\frac{d^\alpha}{dx^\alpha}\int_0^xf(t)(dt)^\alpha=\alpha ! f(x),
\end{equation}
\begin{equation}
\label{FTC2}
\int_0^xf^{(\alpha)}(t)(dt)^\alpha=\alpha ! (f(x)-f(0)),
\end{equation}
where $\alpha!:=\Gamma(1+\alpha)$.
The Leibniz rule for the Jumarie fractional derivative is equal to
the standard one:
$$(f(x)g(x))^{(\alpha)}=f^{(\alpha)}(x)g(x)+f(x)g^{(\alpha)}(x).$$
Here we see another advantage of derivative \eqref{eq:def:fd:Jumarie}:
the fractional derivative of a product is not an infinite sum,
in opposite to the Leibniz rule for the Riemann-Liouville
fractional derivative \cite[p.~91]{Podlubny}.

One can easily generalize the previous definitions and results for
functions with a domain $[a,b]$:
$$f^{(\alpha)}(x)=\frac{1}{\Gamma(1-\alpha)}\frac{d}{dx}\int_a^x(x-t)^{-\alpha}(f(t)-f(a))\,dt$$
and
$$\int_a^xf(t)(dt)^\alpha=\alpha\int_a^x(x-t)^{\alpha-1}f(t)dt.$$


\section{Fractional integral theorems}
\label{sec:IT}

In this section we introduce some useful fractional integral
and fractional differential operators. With them we prove
fractional versions of the integral theorems
of Green, Gauss, and Stokes. Throughout the text we assume that all integrals and derivatives exist.


\subsection{Fractional operators}
\label{sec:Def}

Let us consider a continuous function $f = f\left(x_1,\ldots,x_n\right)$
defined on $R = \Pi_{i=1}^{n} [a_i,b_i]\subset\mathbb R^n$. Let us extend Jumarie's fractional derivative and the $(dt)^\alpha$
integral to functions with $n$ variables.
For $x_i\in[a_i,b_i]$, $i = 1,\ldots,n$, and $\alpha\in(0,1)$,
we define the fractional integral operator as
\begin{equation*}
{{_{a_i}}I_{x_i}^\alpha}[i]=\alpha \int_{a_i}^{x_i} (x_i-t)^{\alpha-1}\,dt \, .
\end{equation*}
These operators act on $f$ in the following way:
$${{_{a_i}}I_{x_i}^\alpha}[i] f(x_1,\ldots,x_n)=\alpha \int_{a_i}^{x_i} f(x_1,\ldots,x_{i-1},t,x_{i+1},\ldots,x_n)(x_i-t)^{\alpha-1}\,dt \, , \quad
i = 1,\ldots,n\, .
$$
Let $\Xi = \{k_1,\ldots,k_s\}$ be an arbitrary nonempty subset of
$\{1,\ldots,n\}$. We define the fractional multiple integral operator over the region
$R_\Xi = \Pi_{i=1}^{s}[a_{k_i},x_{k_i}]$ by
\begin{equation*}
\begin{split}
{I_{R_\Xi}^\alpha}[k_1,\ldots,k_s]
&={{_{a_{k_1}}}I_{x_{k_1}}^{\alpha}}[k_1] \ldots \,
{{_{a_{k_s}}}I_{x_{k_s}}^{\alpha}}[k_s]\\
&= \alpha^s \int_{a_{k_1}}^{x_{k_1}}\cdots\int_{a_{k_s}}^{x_{k_s}}
(x_{k_1}-t_{k_1})^{\alpha-1}\cdots(x_{k_s}-t_{k_s})^{\alpha-1}\,dt_{k_s} \ldots dt_{k_1}
\end{split}
\end{equation*}
which acts on $f$ by
\begin{multline*}
{I_{R_\Xi}^\alpha}[k_1,\ldots,k_s] f(x_1,\ldots,x_n)\\
=\alpha^s \int_{a_{k_1}}^{x_{k_1}}\cdots\int_{a_{k_s}}^{x_{k_s}}
f(\xi_1,\ldots,\xi_n) (x_{k_1}-t_{k_1})^{\alpha-1}\cdots(x_{k_s}-t_{k_s})^{\alpha-1}\,dt_{k_s} \ldots dt_{k_1}\, ,
\end{multline*}
where $\xi_j = t_j$ if $j \in \Xi$, and $\xi_j = x_j$ if $j \notin \Xi$, $j = 1,\ldots,n$.
The fractional volume integral of $f$ over the whole domain
$R$ is given by
$${I_R^\alpha} f= \alpha^n  \int_{a_1}^{b_1}\cdots\int_{a_n}^{b_n} f(t_1,\ldots,t_n) (b_1-t_1)^{\alpha-1}\cdots(b_n-t_n)^{\alpha-1}\,dt_n \ldots dt_1.$$

The fractional partial derivative operator with respect to
the $i$th variable $x_i$, $i = 1,\ldots,n$, of order $\alpha\in(0,1)$ is defined as follows:
$${{_{a_i}}D_{x_i}^{\alpha}}[i]=\frac{1}{\Gamma(1-\alpha)}\frac{\partial}{\partial x_i}\int_{a_i}^{x_i}(x_i-t)^{-\alpha}\,dt \, ,$$
which act on $f$ by
\begin{multline*}
{{_{a_i}}D_{x_i}^{\alpha}}[i] f(x_1,\ldots,x_n)\\
=\frac{1}{\Gamma(1-\alpha)}\frac{\partial}{\partial x_i}\int_{a_i}^{x_i}(x_i-t)^{-\alpha}\left[
f(x_1,\ldots,x_{i-1},t,x_{i+1},\ldots,x_n) - f(x_1,\ldots,x_{i-1},a_i,x_{i+1},\ldots,x_n)\right]\,dt \, ,
\end{multline*}
$i = 1,\ldots,n$.

We observe that the Jumarie fractional integral and the Jumarie fractional derivative can be obtained putting $n = 1$:
$${_aI_x^\alpha} [1]f(x)=\alpha \int_a^x (x-t)^{\alpha-1}f(t)\,dt= \int_a^x f(t)\,(dt)^\alpha$$
and
$${_aD_x^{\alpha}}[1]f(x)=\frac{1}{\Gamma(1-\alpha)}\frac{d}{d x}\int_a^x(x-t)^{-\alpha}(f(t)-f(a))\,dt=f^{(\alpha)}(x).$$
Using these notations, formulae \eqref{FTC1}--\eqref{FTC2} can be
presented as
$$
{_aD_x^{\alpha}}[1]{_aI_{x}^\alpha} [1]f(x)=\alpha!f(x)
$$
\begin{equation}
\label{FTCJ}
{_aI_x^\alpha} [1]{_aD_{x}^{\alpha}}[1]f(x)=\alpha!(f(x)-f(a)).
\end{equation}

In the two dimensional case we define the fractional line integral on $\partial R$, $R = [a,b]\times [c,d]$, by
$$
{I_{\partial R}^\alpha} f = {I_{\partial R}^\alpha} [1]f + {I_{\partial R}^\alpha} [2]f
$$
where
\begin{equation*}
\begin{split}
{I_{\partial R}^\alpha}[1]f &= {{_{a}}I_b^\alpha}[1] [ f(b,c) - f(b,d)]\\
&=\alpha \int_a^b [f(t,c)-f(t,d)]\,(b-t)^{\alpha-1}\,dt
\end{split}
\end{equation*}
and
\begin{equation*}
\begin{split}
{I_{\partial R}^\alpha}[2]f  &= {{_c}I_d^\alpha}[2] [ f(b,d)- f(a,d)]\\
   &=\alpha \int_c^d [f(b,t)-f(a,t)] \, (d-t)^{\alpha-1}\,dt.
\end{split}
\end{equation*}


\subsection{Fractional differential vector operations}
\label{sec:FDVO}

Let $W_X= [a,x]\times[c,y]\times[e,z]$, $W= [a,b]\times[c,d]\times[e,f]$, and denote
$(x_1,x_2,x_3)$ by $(x,y,z)$. We introduce the fractional nabla operator by
\begin{equation*}
{\nabla_{W_X}^{\alpha}}=i {_aD_x^{\alpha}}[1]+j {_cD_y^{\alpha}}[2]+k
{_eD_z^{\alpha}}[3]\, ,
\end{equation*}
where the $i$, $j$, $k$ define a fixed right-handed orthonormal basis.
If $f:\mathbb{R}^3\rightarrow \mathbb{R}$ is a continuous function,
then we define its fractional gradient as
\begin{equation*}
{\mbox{Grad}_{W_X}^{\alpha}}f={\nabla_{W_X}^{\alpha}}f
=i {_aD_x^{\alpha}}[1]f(x,y,z) + j {_cD_y^{\alpha}}[2]f(x,y,z) + k
{_eD_z^{\alpha}}[3]f(x,y,z) \, .
\end{equation*}
If $F=[F_x,F_y,F_z]:\mathbb{R}^3\rightarrow \mathbb{R}^3$ is a
continuous vector field, then we define its fractional divergence
and fractional curl by
\begin{equation*}
{\mbox{Div}_{W_X}^{\alpha}}F={\nabla_{W_X}^{\alpha}}\circ
F={_aD_x^{\alpha}}[1]F_x(x,y,z)+{_cD_y^{\alpha}}[2]F_y(x,y,z)+{_eD_z^{\alpha}}[3]F_z(x,y,z)
\end{equation*}
and
\begin{multline*}
{\mbox{Curl}_{W_X}^{\alpha}}F={\nabla_{W_X}^{\alpha}}\times
F=i\left({_cD_y^{\alpha}}[2]F_z(x,y,z)-{_eD_z^{\alpha}}[3]F_y(x,y,z)\right)\\
+j\left({_eD_z^{\alpha}}[3]F_x(x,y,z)-{_aD_x^{\alpha}}[1]F_z(x,y,z)\right)
+k\left({_aD_x^{\alpha}}[1]F_y(x,y,z)-{_cD_y^{\alpha}}[2]F_x(x,y,z)\right).
\end{multline*}
Note that these fractional differential operators are non-local.
Therefore, the fractional gradient, divergence, and curl,
depend on the region $W_X$.

For $F:\mathbb R^3\rightarrow \mathbb R^3$ and
$f,g:\mathbb R^3\rightarrow \mathbb R$ it is easy to check the
following relations:
\begin{itemize}
\item[(i)] ${\mbox{Div}_{W_X}^{\alpha}}(fF)=f{\mbox{Div}_{W_X}^{\alpha}}F+F\circ{\mbox{Grad}_{W_X}^{\alpha}}f$,
\item[(ii)] ${\mbox{Curl}_{W_X}^{\alpha}}({\mbox{Grad}_{W_X}^{\alpha}}f)=[0,0,0]$,
\item[(iii)] ${\mbox{Div}_{W_X}^{\alpha}}({\mbox{Curl}_{W_X}^{\alpha}}F)=0$,
\item[(iv)] ${\mbox{Grad}_{W_X}^{\alpha}}(fg)=g{\mbox{Grad}_{W_X}^{\alpha}}f+f{\mbox{Grad}_{W_X}^{\alpha}}g$,
\item[(v)] ${\mbox{Div}_{W_X}^{\alpha}}({\mbox{Grad}_{W_X}^{\alpha}}f)={_aD_x^{\alpha}}[1]{_aD_x^{\alpha}}[1]f+
{_cD_y^{\alpha}}[2]{_cD_y^{\alpha}}[2]f+
{_eD_z^{\alpha}}[3]{_eD_z^{\alpha}}[3]f$.
\end{itemize}
Let us recall that in general $({D_{W_X}^{\alpha}})^2\neq {D_{W_X}^{2 \alpha}}$ (see \cite{Jumarie3}).

A fractional flux of the vector field $F$ across $\partial W$
is a fractional oriented surface integral of the field such that
\begin{equation*}
({I_{\partial W}^{\alpha}},F)={I_{\partial W}^{\alpha}}[2,3]F_x(x,y,z)
+{I_{\partial W}^{\alpha}}[1,3]F_y(x,y,z)+{I_{\partial W}^{\alpha}}[1,2]F_z(x,y,z) \, ,
\end{equation*}
where
$${I_{\partial W}^{\alpha}}[1,2]f(x,y,z)={{_{a}}I_b^\alpha}[1] {{_{c}}I_d^\alpha}[2]  [ f(b,d,f) - f(b,d,e)] \, ,$$
$${I_{\partial W}^{\alpha}}[1,3]f(x,y,z)={{_{a}}I_b^\alpha}[1] {{_{e}}I_f^\alpha}[3]  [ f(b,d,f) - f(b,c,f)] \, ,$$
and
$${I_{\partial W}^{\alpha}}[2,3]f(x,y,z)={{_{c}}I_d^\alpha}[2] {{_{e}}I_f^\alpha}[3]  [ f(b,d,f) - f(a,d,f)] \, .$$


\subsection{Fractional theorems of Green, Gauss, and Stokes}
\label{sec:Green}

We now formulate the fractional formulae of Green, Gauss, and Stokes.
Analogous results via Caputo fractional derivatives
and Riemann-Liouville fractional integrals were obtained
by Tarasov in \cite{Tarasov}.

\begin{Theorem}[Fractional Green's theorem for a rectangle]
Let $f$ and $g$ be two continuous functions whose domains contain
$R=[a,b]\times[c,d]\subset\mathbb R^2$. Then,
$$
{I_{\partial R}^\alpha} [1]f+{I_{\partial R}^\alpha} [2]g
=\frac{1}{\alpha!}{I_{R}^\alpha}\left[{_aD_b^{\alpha}}[1]g-{_cD_d^{\alpha}}[2]f\right]\, .
$$
\end{Theorem}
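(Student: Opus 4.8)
The plan is to start from the right-hand side and reduce it to the two boundary line integrals on the left by means of the fractional fundamental theorem of calculus \eqref{FTCJ}, applied one coordinate direction at a time. First I would unfold the fractional volume integral over $R$ as the composition of the two one-dimensional operators, ${I_{R}^\alpha} = {_aI_b^\alpha}[1]\,{_cI_d^\alpha}[2]$ (immediate from the definitions together with Fubini's theorem), and split $\frac{1}{\alpha!}{I_{R}^\alpha}\bigl[{_aD_b^{\alpha}}[1]g - {_cD_d^{\alpha}}[2]f\bigr]$ into the term $\frac{1}{\alpha!}{I_{R}^\alpha}[{_aD_b^{\alpha}}[1]g]$ and the term $-\frac{1}{\alpha!}{I_{R}^\alpha}[{_cD_d^{\alpha}}[2]f]$, which I would treat separately.

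For the first term I would push the integral operator ${_cI_d^\alpha}[2]$, which acts only on the second variable, through the fractional derivative ${_aD_x^{\alpha}}[1]$, which acts only on the first variable; this commutation is just differentiation under the integral sign, legitimate under the blanket continuity and existence hypotheses. This rewrites ${I_{R}^\alpha}[{_aD_b^{\alpha}}[1]g]$ as ${_aI_b^\alpha}[1]\,{_aD_x^{\alpha}}[1]\bigl({_cI_d^\alpha}[2]g\bigr)$, and now \eqref{FTCJ} in the first variable collapses ${_aI_b^\alpha}[1]\,{_aD_x^{\alpha}}[1]$ to $\alpha!$ times the difference of the values at $x=b$ and $x=a$, giving $\alpha!\,{_cI_d^\alpha}[2]\bigl[g(b,\cdot)-g(a,\cdot)\bigr]$, which is precisely $\alpha!\,{I_{\partial R}^\alpha}[2]g$ by definition of the fractional line integral. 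Dividing by $\alpha!$, the first term is exactly ${I_{\partial R}^\alpha}[2]g$. Symmetrically, for the second term I would interchange ${_aI_b^\alpha}[1]$ with ${_cD_y^{\alpha}}[2]$ and apply \eqref{FTCJ} in the second variable to obtain ${_aI_b^\alpha}[1]\bigl[\alpha!\,(f(\cdot,d)-f(\cdot,c))\bigr] = -\alpha!\,{I_{\partial R}^\alpha}[1]f$, the minus sign coming from the orientation built into the definition of ${I_{\partial R}^\alpha}[1]$; after the factor $-\frac{1}{\alpha!}$ this term contributes $+\,{I_{\partial R}^\alpha}[1]f$.

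Adding the two contributions yields ${I_{\partial R}^\alpha}[1]f + {I_{\partial R}^\alpha}[2]g$, which is the assertion. I expect the only genuinely delicate point to be the justification of the two commutations — equivalently, differentiating under the integral sign and the Fubini interchange of the iterated fractional integrals — since once these are granted the proof is a single application of \eqref{FTCJ} in each variable. A secondary point worth spelling out is that the nonlocal correction $-g(a,y)$ built into the definition of ${_aD_x^{\alpha}}[1]g$ is exactly what is needed for \eqref{FTCJ} to output the combination $g(b,\cdot)-g(a,\cdot)$ that appears in the boundary integral, so that no extra terms survive.
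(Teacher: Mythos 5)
Your argument is correct and is essentially the paper's own proof run in the opposite direction: the paper starts from the boundary integrals and uses \eqref{FTCJ} in each variable to rewrite the differences $f(b,c)-f(b,d)$ and $g(b,d)-g(a,d)$ as iterated fractional integrals of ${_cD_d^{\alpha}}[2]f$ and ${_aD_b^{\alpha}}[1]g$, whereas you start from the volume integral and collapse it; the decomposition, the coordinate-wise use of \eqref{FTCJ}, and the (implicit in the paper, explicit in your write-up) Fubini/commutation step identifying ${_aI_b^\alpha}[1]\,{_cI_d^\alpha}[2]$ with ${I_R^\alpha}$ are the same. Your added remark on justifying the interchange of the order of the operators is a point the paper glosses over, but it does not change the route.
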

\begin{proof}
We have
$$
{I_{\partial R}}^\alpha [1]f+{I_{\partial R}^\alpha} [2]g={_aI_b^\alpha}[1] [f(b,c)-f(b,d)] + {_cI_d^\alpha}[2]
[g(b,d)-g(a,d)]\, .
$$
By equation \eqref{FTCJ},
\begin{equation*}
\begin{split}
f(b,c)-f(b,d)&=-\frac{1}{\alpha!} {_cI_d^\alpha}[2]
{_cD_d^{\alpha}}[2]f(b,d)\, ,\\
g(b,d)-g(a,d)&=\frac{1}{\alpha!}
{_aI_b^\alpha}[1] {_aD_b^{\alpha}}[1]g(b,d)\, .
\end{split}
\end{equation*}
Therefore,
\begin{equation*}
\begin{split}
{I_{\partial R}^\alpha} [1]f+ {I^\alpha_{\partial R}} [2]g
&= - {_aI_b^\alpha}[1] \frac{1}{\alpha!} {_cI_d^\alpha}[2]
{_cD_d^{\alpha}}[2]f(b,d) + {_cI_d^\alpha}[2] \frac{1}{\alpha!}
{_aI_b^\alpha}[1] {_aD_b^{\alpha}}[1]g(b,d)\\
&=\frac{1}{\alpha!}{I_{R}^\alpha}\left[{_aD_b^{\alpha}}[1]g-{_cD_d^{\alpha}}[2]f\right]\, .
\end{split}
\end{equation*}
\end{proof}

\begin{Theorem}[Fractional Gauss's theorem for a parallelepiped]
\label{GaussTheo}
Let $F=(F_x,F_y,F_z)$ be a continuous vector field in a domain that contains
$W=[a,b]\times[c,d]\times[e,f]$. If the boundary of $W$ is a closed
surface $\partial W$, then
\begin{equation}
\label{Gauss}
({I_{\partial W}^{\alpha}},F)=\frac{1}{\alpha !}
{I_{W}^{\alpha}}\mbox{Div}{_W^{\alpha}}F\, .
\end{equation}
\end{Theorem}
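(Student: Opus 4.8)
The plan is to imitate the proof just given for the fractional Green's theorem, handling the three flux contributions ${I_{\partial W}^{\alpha}}[2,3]F_x$, ${I_{\partial W}^{\alpha}}[1,3]F_y$ and ${I_{\partial W}^{\alpha}}[1,2]F_z$ separately and then adding the outcomes. First I would expand the left-hand side of \eqref{Gauss} by the definitions of the fractional flux and of the fractional surface integrals, so that, for example, the third contribution reads ${I_{\partial W}^{\alpha}}[1,2]F_z={_aI_b^\alpha}[1]\,{_cI_d^\alpha}[2]\,\bigl[F_z(b,d,f)-F_z(b,d,e)\bigr]$, and analogously for the other two, each written as a double fractional integral of a difference of values of a component of $F$ on two opposite faces of $W$.

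Next, for each of the three terms I would apply the fractional fundamental theorem of calculus \eqref{FTCJ} in the single coordinate that has not yet been integrated, treating the remaining two coordinates as fixed parameters. This converts the difference of boundary values into a one-dimensional fractional integral of a fractional partial derivative, namely $F_z(b,d,f)-F_z(b,d,e)=\frac{1}{\alpha!}\,{_eI_f^\alpha}[3]\,{_eD_f^{\alpha}}[3]F_z(b,d,f)$, and likewise $F_y(b,d,f)-F_y(b,c,f)=\frac{1}{\alpha!}\,{_cI_d^\alpha}[2]\,{_cD_d^{\alpha}}[2]F_y(b,d,f)$ and $F_x(b,d,f)-F_x(a,d,f)=\frac{1}{\alpha!}\,{_aI_b^\alpha}[1]\,{_aD_b^{\alpha}}[1]F_x(b,d,f)$. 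Substituting back, the $F_z$ contribution becomes $\frac{1}{\alpha!}\,{_aI_b^\alpha}[1]\,{_cI_d^\alpha}[2]\,{_eI_f^\alpha}[3]\bigl[{_eD_f^{\alpha}}[3]F_z\bigr]$, and the $F_y$ and $F_x$ contributions become the corresponding triple products of one-dimensional fractional integral operators acting on ${_cD_d^{\alpha}}[2]F_y$ and on ${_aD_b^{\alpha}}[1]F_x$ respectively.

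Finally I would note that the composition ${_aI_b^\alpha}[1]\,{_cI_d^\alpha}[2]\,{_eI_f^\alpha}[3]$, in any order, is precisely the fractional volume integral ${I_W^\alpha}$ of Section~\ref{sec:Def}: unwinding the definitions turns each operator into an ordinary Riemann integral in its own variable, and Fubini's theorem allows these to be iterated in any arrangement. Adding the three rewritten contributions then gives $\frac{1}{\alpha!}\,{I_W^\alpha}\bigl[{_aD_b^{\alpha}}[1]F_x+{_cD_d^{\alpha}}[2]F_y+{_eD_f^{\alpha}}[3]F_z\bigr]=\frac{1}{\alpha!}\,{I_W^\alpha}\,\mbox{Div}{_W^{\alpha}}F$, which is \eqref{Gauss}.

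The only steps that carry content beyond bookkeeping are the interchange of the integrations in different variables, needed both to commute the operators and to identify their composition with ${I_W^\alpha}$, and the slice-by-slice use of the one-variable identity \eqref{FTCJ} with the other coordinates held fixed. Under the standing continuity and existence assumptions of this section both are routine — Fubini together with the one-dimensional result already in hand — so I anticipate no genuine obstacle: the argument is essentially the two-term Green's theorem computation carried out in three coordinates at once.
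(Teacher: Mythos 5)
Your proposal is correct and follows essentially the same route as the paper's own proof: expand the flux into the three face-difference terms, apply the one-dimensional identity \eqref{FTCJ} in the remaining coordinate of each term, and identify the composition ${_aI_b^\alpha}[1]\,{_cI_d^\alpha}[2]\,{_eI_f^\alpha}[3]$ with ${I_W^\alpha}$ before summing to obtain the divergence. The only difference is that you make explicit the Fubini-type interchange that the paper leaves implicit.
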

\begin{proof}
The result follows by direct transformations:
\begin{equation*}
\begin{split}
({I_{\partial W}^{\alpha}},F)&={I^\alpha_{\partial W}} [2,3]F_x+{I^\alpha_{\partial W}} [1,3]F_y+{I^\alpha_{\partial W}} [1,2]F_z\\
&={_cI_d^\alpha}[2] {_eI_f^\alpha}[3](F_x(b,d,f)-F_x(a,d,f))+ {_aI_b^\alpha}[1] {_eI_f^\alpha}[3](F_y(b,d,f)-F_y(b,c,f))\\
&\qquad +{_aI_b^\alpha}[1]{_cI_d^\alpha}[2](F_z(b,d,f)-F_z(b,d,e))\\
&= \frac{1}{\alpha!} {_aI_b^\alpha}[1] {{_c}I_d^\alpha}[2]{{_e}I_f^{\alpha}}[3]
({_aD_b^{\alpha}}[1]F_x(b,d,f)+{_cD_d^{\alpha}}[2]F_y(b,d,f)+{_eD_f^{\alpha}}[3]F_z(b,d,f))\\
&=\frac{1}{\alpha!} {I_W^\alpha}({_aD_b^{\alpha}}[1]F_x+{_cD_d^{\alpha}}[2]F_y+{_eD_f^{\alpha}}[3]F_z)\\
&=\frac{1}{\alpha !}{I_{W}^{\alpha}}{\mbox{Div}_W^{\alpha}}F\, .
\end{split}
\end{equation*}
\end{proof}

Let $S$ be an open, oriented, and nonintersecting surface,
bounded by a simple and closed curve $\partial S$.
Let $F=[F_x,F_y,F_z]$ be a continuous vector field.
Divide up $S$ by sectionally curves into
$N$ subregions $S_1$, $S_2$, $\ldots$, $S_N$. Assume that for small enough
subregions each $S_j$ can be approximated by a plane rectangle
$A_j$ bounded by curves $C_1$, $C_2$, $\ldots$, $C_N$. Apply Green's
theorem to each individual rectangle $A_j$. Then, summing over the
subregions,
\begin{equation*}
\sum_j\frac{1}{\alpha !}{I_{A_j}^{\alpha}}({\nabla_{A_j}^{\alpha}}\times
F)=\sum_jI_{\partial A_j}^{\alpha}F \, .
\end{equation*}
Furthermore, letting $N\rightarrow\infty$
\begin{equation*}
\sum_j\frac{1}{\alpha !}{I_{A_j}^{\alpha}}({\nabla_{A_j}^{\alpha}}\times
F)\rightarrow \frac{1}{\alpha
!}({I_{S}^{\alpha}},{\mbox{Curl}_{S}^{\alpha}} F)
\end{equation*}
while
\begin{equation*}
\sum_jI_{\partial A_j}^{\alpha}F\rightarrow {I_{\partial
S}^{\alpha}}F\, .
\end{equation*}
We conclude with the fractional Stokes formula
\begin{equation*}
\frac{1}{\alpha !}({I_{S}^{\alpha}},{\mbox{Curl}_{S}^{\alpha}}
F)={I_{\partial S}^{\alpha}}F\, .
\end{equation*}


\section{Fractional calculus of variations with
multiple integrals} \label{sec:NecOpt}

Consider a function $w=w(x,y)$ with two variables. Assume that the
domain of $w$ contains the rectangle $R=[a,b]\times[c,d]$ and that
$w$ is continuous on $R$. We introduce the variational functional defined by
\begin{equation}
\label{eq:funct:prb:P}
\begin{split}
J(w)&=I_{R}^\alpha L\left(x,y,w(x,y),{_aD_x^{\alpha}}[1]w(x,y),{_cD_y^{\alpha}}[2]w(x,y)\right)\\
   &:= \alpha^2 \displaystyle\int_a^b\int_c^d L\left(x,y,w,{_aD_x^{\alpha}}[1]w,{_cD_y^{\alpha}}[2]w\right)
   (b-x)^{\alpha-1}(d-y)^{\alpha-1}\,dydx.
\end{split}
\end{equation}
We assume that the lagrangian $L$ is at least of class $C^1$. Observe that,
using the notation of the $(dt)^\alpha$ integral as presented in \cite{Jumarie3},
\eqref{eq:funct:prb:P} can be written as
\begin{equation}
\label{eq:functP}
J(w)=\displaystyle\int_a^b\int_c^d
L\left(x,y,w(x,y),{_aD_x^{\alpha}}[1]w(x,y),{_cD_y^{\alpha}}[2]w(x,y)\right)\,(dy)^\alpha (dx)^\alpha\, .
\end{equation}
Consider the following FCV problem, which we address as problem $(P)$.\\

\textbf{Problem $\mathbf{(P)}$}: minimize (or maximize) functional $J$ defined by \eqref{eq:functP}
with respect to the set of continuous functions $w(x,y)$
such that $w |_{\partial R} = \varphi(x,y)$ for some given function $\varphi$.\\

The continuous functions $w(x,y)$ that assume the prescribed values $w |_{\partial R} = \varphi(x,y)$
at all points of the boundary curve of $R$ are said to be admissible.
In order to prove necessary optimality conditions for problem $(P)$
we use a two dimensional analogue of fractional integration by parts.
Lemma~\ref{lemma} provides the necessary fractional rule.

\begin{Lemma}
\label{lemma}
Let $F$, $G$, and $h$ be continuous functions whose domains contain $R$.
If $h\equiv 0$ on $\partial R$, then
\begin{multline*}
\int_a^b\int_c^d [G(x,y){_aD_x^{\alpha}}[1]h(x,y)-F(x,y){_cD_y^{\alpha}}[2]h(x,y)] (b-x)^{\alpha-1}(d-y)^{\alpha-1}\,dydx\\
=-\int_a^b\int_c^d [({_aD_x^{\alpha}}[1]G(x,y)-{_cD_y^{\alpha}}[2]F(x,y))h(x,y)] (b-x)^{\alpha-1}(d-y)^{\alpha-1}\,dydx\, .
\end{multline*}
\end{Lemma}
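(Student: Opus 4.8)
The plan is to deduce the identity from the fractional Green's theorem for a rectangle, using the fractional Leibniz rule to transfer the fractional derivative off $h$. Since each Jumarie fractional partial derivative ${_aD_x^{\alpha}}[1]$, ${_cD_y^{\alpha}}[2]$ acts on a single variable, with the other variable entering only as a parameter, the ordinary product rule holds componentwise (cf. relation (iv) of Section~\ref{sec:FDVO}):
\[
G\,{_aD_x^{\alpha}}[1]h = {_aD_x^{\alpha}}[1](Gh) - ({_aD_x^{\alpha}}[1]G)\,h,
\qquad
F\,{_cD_y^{\alpha}}[2]h = {_cD_y^{\alpha}}[2](Fh) - ({_cD_y^{\alpha}}[2]F)\,h.
\]
Subtracting the second identity from the first and inserting the outcome into the integrand on the left-hand side of the lemma, the part built from the terms $({_aD_x^{\alpha}}[1]G)\,h$ and $({_cD_y^{\alpha}}[2]F)\,h$ is, after integration against the weight $(b-x)^{\alpha-1}(d-y)^{\alpha-1}$, exactly the right-hand side of the lemma. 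Hence it remains only to show that
\[
\int_a^b\int_c^d \bigl[{_aD_x^{\alpha}}[1](Gh) - {_cD_y^{\alpha}}[2](Fh)\bigr](b-x)^{\alpha-1}(d-y)^{\alpha-1}\,dy\,dx = 0 .
\]

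To prove this vanishing I would recognize the double integral, up to the factor $\alpha^{-2}$, as ${I_R^{\alpha}}\bigl[{_aD_x^{\alpha}}[1](Gh) - {_cD_y^{\alpha}}[2](Fh)\bigr]$, and then apply the fractional Green's theorem for a rectangle with $g$ replaced by $Gh$ and $f$ replaced by $Fh$ (both are continuous, so the hypotheses hold). This rewrites the quantity, up to a nonzero multiplicative constant, as the sum of fractional line integrals $I_{\partial R}^{\alpha}[1](Fh) + I_{\partial R}^{\alpha}[2](Gh)$. Unfolding the definitions,
\[
I_{\partial R}^{\alpha}[1](Fh) = \alpha\int_a^b\bigl[(Fh)(t,c) - (Fh)(t,d)\bigr](b-t)^{\alpha-1}\,dt ,
\qquad
I_{\partial R}^{\alpha}[2](Gh) = \alpha\int_c^d\bigl[(Gh)(b,t) - (Gh)(a,t)\bigr](d-t)^{\alpha-1}\,dt ,
\]
so each integrand samples $Fh$ and $Gh$ only along the edges of $R$. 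Because $h\equiv 0$ on $\partial R$, all of these values vanish, both line integrals are zero, the displayed double integral is therefore zero, and the lemma follows.

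The computation is routine; the only points that require genuine care are that the ordinary Leibniz rule indeed survives the passage to functions of two variables (it does, the second variable being a passive parameter), and the bookkeeping that the single boundary hypothesis $h\equiv 0$ on $\partial R$ is enough to annihilate precisely the pair of edge integrals $I_{\partial R}^{\alpha}[1](Fh)$ and $I_{\partial R}^{\alpha}[2](Gh)$ that the fractional Green's theorem produces.
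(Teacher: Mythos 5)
Your proposal is correct and follows essentially the same route as the paper: both apply the fractional Green's theorem to the products $Fh$ and $Gh$, use the Leibniz rule to split the fractional derivatives of the products, and invoke $h\equiv 0$ on $\partial R$ to annihilate the two boundary line integrals. The only difference is one of ordering (you apply Leibniz first and Green's theorem to the residual ``total derivative'' term, whereas the paper applies Green's theorem first and expands by Leibniz inside), which does not change the substance of the argument.
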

\begin{proof}
By choosing $f=F\cdot h$ and $g=G\cdot h$ in Green's formula, we obtain
$${I_{\partial R}^\alpha} [1](F \, h)+{I_{\partial R}^\alpha} [2](G \, h)
=\frac{1}{\alpha!}{I_{R}^\alpha}[{_aD_b^{\alpha}}[1]G \cdot h+G\cdot {_aD_b^{\alpha}}[1]h
-{_cD_d^{\alpha}}[2]F \cdot h-F\cdot {_cD_d^{\alpha}}[2]h] \, ,$$
which is equivalent to
$$
\frac{1}{ \alpha!}{I_{R}^\alpha} \left[G \cdot {_aD_b^{\alpha}}[1]h -F \cdot {_cD_d^{\alpha}}[2]h \right]
={I_{\partial R}^\alpha} [1](F \, h)+{I_{\partial R}^\alpha} [2](G \, h)-\frac{1}{\alpha!}{I_{R}^\alpha}\left[
({_aD_b^{\alpha}}[1]G- {_cD_d^{\alpha}}[2]F ) h\right]\, .$$
In addition, since $h \equiv 0$ on $\partial R$, we deduce that
$${I_{R}^\alpha} \left[G \cdot {_aD_b^{\alpha}}[1]h-F \cdot {_cD_d^{\alpha}}[2]h \right]=-{I_{R}^\alpha}
\left[({_aD_b^{\alpha}}[1]G-{_cD_d^{\alpha}}[2]F)h \right] \, .$$
The lemma is proved.
\end{proof}

\begin{Theorem}[Fractional Euler-Lagrange equation]
\label{Theorem1}
Let $w$ be a solution to problem $(P)$.
Then $w$ is a solution of the fractional partial differential equation
\begin{equation}
\label{NecEquation}
\partial_3 L - {_aD_x^{\alpha}}[1] \partial_4 L - {_cD_y^{\alpha}}[2] \partial_5 L =0\, ,
\end{equation}
where by $\partial_i L$, $i = 1, \ldots, 5$, we denote the
usual partial derivative of $L(\cdot,\cdot,\cdot,\cdot,\cdot)$
with respect to its $i$-th argument.
\end{Theorem}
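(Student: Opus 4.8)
The standard approach for deriving Euler--Lagrange equations is the variational argument, and the fractional case should follow the same skeleton. First I would take an admissible variation: let $w$ be the assumed minimizer and set $\hat w = w + \epsilon h$, where $h$ is an arbitrary continuous function with $h \equiv 0$ on $\partial R$ (so that $\hat w$ remains admissible for every $\epsilon$), and $\epsilon$ is a real parameter. Because the fractional partial derivative operators ${_aD_x^{\alpha}}[1]$ and ${_cD_y^{\alpha}}[2]$ are linear, we have ${_aD_x^{\alpha}}[1]\hat w = {_aD_x^{\alpha}}[1]w + \epsilon\, {_aD_x^{\alpha}}[1]h$ and similarly for the $y$-derivative. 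Then $j(\epsilon) := J(\hat w)$ is a real function of $\epsilon$ attaining an extremum at $\epsilon = 0$, so $j'(0) = 0$.

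Next I would compute $j'(0)$ by differentiating under the integral sign (justified by the $C^1$ assumption on $L$ and continuity of all data) and applying the chain rule. This yields
\begin{equation*}
j'(0) = \int_a^b\int_c^d \Bigl[ \partial_3 L \cdot h + \partial_4 L \cdot {_aD_x^{\alpha}}[1]h + \partial_5 L \cdot {_cD_y^{\alpha}}[2]h \Bigr](b-x)^{\alpha-1}(d-y)^{\alpha-1}\,dy\,dx = 0\, ,
\end{equation*}
where the arguments of each $\partial_i L$ are evaluated along $(x,y,w,{_aD_x^{\alpha}}[1]w,{_cD_y^{\alpha}}[2]w)$. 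The two terms involving fractional derivatives of $h$ are exactly of the form handled by Lemma~\ref{lemma}: applying it with $G = \partial_4 L$ and $F = -\partial_5 L$ moves the fractional derivatives off $h$ and onto the coefficients, at the cost of a sign, producing
\begin{equation*}
\int_a^b\int_c^d \Bigl[ \partial_3 L - {_aD_x^{\alpha}}[1]\partial_4 L - {_cD_y^{\alpha}}[2]\partial_5 L \Bigr] h(x,y)\,(b-x)^{\alpha-1}(d-y)^{\alpha-1}\,dy\,dx = 0\, .
\end{equation*}

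Finally, since this holds for every continuous $h$ vanishing on $\partial R$, and the weight $(b-x)^{\alpha-1}(d-y)^{\alpha-1}$ is strictly positive on the open rectangle, a fundamental-lemma-of-the-calculus-of-variations argument forces the bracketed expression to vanish identically on $R$, which is precisely \eqref{NecEquation}. The main technical obstacle is ensuring that Lemma~\ref{lemma} applies cleanly: it requires the coefficient functions to be such that their fractional derivatives exist and the Green-type manipulation is valid, which here means assuming (as the paper does globally) that all the relevant fractional derivatives of $\partial_4 L$ and $\partial_5 L$ along the optimal $w$ exist; one should also be slightly careful that the fundamental lemma still holds with the extra positive weight factor, but since that factor is continuous and nonvanishing on the interior, the classical localization argument carries over without change.
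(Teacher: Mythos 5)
Your proposal is correct and follows essentially the same route as the paper's own proof: form the variation $w+\epsilon h$, set $j'(0)=0$, integrate by parts via Lemma~\ref{lemma}, and conclude with the fundamental lemma. Your sign bookkeeping ($G=\partial_4 L$, $F=-\partial_5 L$) matches what the lemma requires, and your remarks on the positive weight factor and the existence of the fractional derivatives of $\partial_4 L$, $\partial_5 L$ are consistent with the paper's standing assumptions.
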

\begin{proof}
Let $h$ be a continuous function on $R$ such that $h\equiv 0$ on $\partial R$,
and consider an admissible variation $w+\epsilon h$,
for $\epsilon$ taking values on a sufficient small neighborhood of zero. Let
$$j(\epsilon)=J(w+\epsilon h).$$
Then $j'(0)=0$, \textrm{i.e.},
$$\alpha^2 \displaystyle\int_a^b\int_c^d\left(\partial_3 L \, h
+ \partial_4 L \,{_aD_x^{\alpha}}[1]h + \partial_5 L \,{_cD_y^{\alpha}}[2]h\right)(b-x)^{\alpha-1}(d-y)^{\alpha-1}\,dydx=0\, .$$
Using Lemma~\ref{lemma}, we obtain
$$\alpha^2 \int_a^b\int_c^d\left(\partial_3 L - {_aD_x^{\alpha}}[1] \partial_4 L
- {_cD_y^{\alpha}}[2] \partial_5 L \right)h (b-x)^{\alpha-1}(d-y)^{\alpha-1}\,dydx=0\, .$$
Since $h$ is an arbitrary function, by the fundamental lemma of calculus of variations we deduce equation \eqref{NecEquation}.
\end{proof}

Let us consider now the situation where we do not impose admissible functions $w$ to be of fixed values on $\partial R$.\\

\textbf{Problem $\mathbf{(P')}$}: minimize (or maximize) $J$ among the set of all
continuous curves $w$ whose domain contains $R$.\\

\begin{Theorem}[Fractional natural boundary conditions]
\label{Theorem2}
Let $w$ be a solution to problem $(P')$. Then $w$ is a solution of the fractional differential equation
\eqref{NecEquation} and satisfies the following equations:
\begin{enumerate}
\item $\partial_4L(a,y,w(a,y),{_aD_a^{\alpha}}[1]w(a,y),{_cD_y^{\alpha}}[2]w(a,y))=0$ for all $y \in [c,d]$;
\item $\partial_4L(b,y,w(b,y),{_aD_b^{\alpha}}[1]w(b,y),{_cD_y^{\alpha}}[2]w(b,y))=0$ for all $y \in [c,d]$;
\item $\partial_5L(x,c,w(x,c),{_aD_x^{\alpha}}[1]w(x,c),{_cD_c^{\alpha}}[2]w(x,c))=0$ for all $x \in [a,b]$;
\item $\partial_5L(x,d,w(x,d),{_aD_x^{\alpha}}[1]w(x,d),{_cD_d^{\alpha}}[2]w(x,d))=0$ for all $x \in [a,b]$.
\end{enumerate}
\end{Theorem}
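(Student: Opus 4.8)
The plan is to mimic the proof of Theorem~\ref{Theorem1}, but this time without the restriction that the variation $h$ vanishes on $\partial R$; instead we keep $h$ completely free. Given a solution $w$ of problem $(P')$, we again form $j(\epsilon) = J(w + \epsilon h)$ for an arbitrary continuous function $h$ whose domain contains $R$, and use the necessary condition $j'(0) = 0$, which yields
$$
\alpha^2 \int_a^b \int_c^d \left(\partial_3 L \, h + \partial_4 L \, {_aD_x^{\alpha}}[1]h + \partial_5 L \, {_cD_y^{\alpha}}[2]h\right)(b-x)^{\alpha-1}(d-y)^{\alpha-1}\,dy\,dx = 0\,.
$$
The key difference from before is that when we integrate the last two terms by parts we cannot simply discard the boundary contributions, so we must retain them explicitly.

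First I would establish a refined version of Lemma~\ref{lemma} in which $h$ is not assumed to vanish on $\partial R$. Going back to the proof of that lemma, the application of the fractional Green's theorem with $f = F\cdot h$ and $g = G\cdot h$ gives
$$
\frac{1}{\alpha!} {I_R^\alpha}\left[G \cdot {_aD_b^{\alpha}}[1]h - F\cdot {_cD_d^{\alpha}}[2]h\right] = {I_{\partial R}^\alpha}[1](F\,h) + {I_{\partial R}^\alpha}[2](G\,h) - \frac{1}{\alpha!}{I_R^\alpha}\left[({_aD_b^{\alpha}}[1]G - {_cD_d^{\alpha}}[2]F)h\right]\,,
$$
and now the two fractional line integrals on $\partial R$ survive. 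Applying this with $F = \partial_5 L$, $G = \partial_4 L$, and unwinding the definitions of ${I_{\partial R}^\alpha}[1]$ and ${I_{\partial R}^\alpha}[2]$ given in Section~\ref{sec:Def}, the necessary condition becomes: the fractional volume integral of $\left(\partial_3 L - {_aD_x^{\alpha}}[1]\partial_4 L - {_cD_y^{\alpha}}[2]\partial_5 L\right)h$ over $R$, plus explicit one-dimensional fractional line integrals over the four edges of $R$ involving $\partial_4 L$ evaluated at $x = a$ and $x = b$ and $\partial_5 L$ evaluated at $y = c$ and $y = d$, equals zero, for every admissible $h$.

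Then I would extract the conclusions by the standard localization argument. First restrict to those $h$ that vanish on $\partial R$: the boundary terms drop, and the fundamental lemma of the calculus of variations forces the Euler--Lagrange equation \eqref{NecEquation} to hold on $R$, exactly as in Theorem~\ref{Theorem1}. With \eqref{NecEquation} in hand, the volume-integral term is identically zero for every $h$, so for arbitrary $h$ only the four boundary line integrals remain, and their sum must vanish. By choosing $h$ supported near one edge at a time (and nonzero there), each of the four fractional line integrals must separately vanish; since each is a one-dimensional $(dt)^\alpha$-type integral of $h$ against a continuous coefficient, a final application of the fundamental lemma on the corresponding interval yields the four pointwise conditions $\partial_4 L = 0$ on $x = a$ and $x = b$ and $\partial_5 L = 0$ on $y = c$ and $y = d$.

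The main obstacle I anticipate is purely bookkeeping: correctly tracking which values of $\partial_4 L$ and $\partial_5 L$ appear on which edge when the fractional Green's theorem is unwound, because the Jumarie line integrals ${I_{\partial R}^\alpha}[1]$ and ${I_{\partial R}^\alpha}[2]$ mix values at the corners and endpoints in a slightly asymmetric way. One must check that after substituting $f = \partial_5 L \cdot h$ and $g = \partial_4 L \cdot h$ the surviving boundary pieces are genuinely the four edge integrals claimed, with no leftover corner terms; granting that, the localization step and the invocation of the fundamental lemma on each edge are routine.
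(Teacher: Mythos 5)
Your proposal is correct and follows essentially the same route as the paper: retain the boundary terms in the fractional integration by parts (the intermediate identity in the proof of Lemma~\ref{lemma} before imposing $h\equiv 0$ on $\partial R$), first specialize to variations vanishing on $\partial R$ to recover \eqref{NecEquation}, and then localize $h$ to one edge at a time and invoke the fundamental lemma on each edge. The bookkeeping you flag works out exactly as you expect, yielding the four edge integrals of $\partial_4 L\,h$ at $x=a,b$ and $\partial_5 L\,h$ at $y=c,d$ with no leftover corner terms.
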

\begin{proof} Proceeding as in the proof of Theorem~\ref{Theorem1} (see also Lemma~\ref{lemma}),
we obtain
\begin{multline}
\label{eq1}
0 =\alpha^2 \displaystyle\int_a^b\int_c^d\left(\partial_3 L \, h+ \partial_4 L \,
{_aD_x^{\alpha}}[1]h + \partial_5 L \, {_cD_y^{\alpha}}[2]h\right)(b-x)^{\alpha-1}(d-y)^{\alpha-1}\,dydx\\
=\alpha^2 \displaystyle\int_a^b\int_c^d\left(\partial_3 L
- {_aD_x^{\alpha}}[1]\partial_4 L- {_cD_y^{\alpha}}[2]\partial_5 L\right)h
\,(b-x)^{\alpha-1}(d-y)^{\alpha-1}\,dydx\\
+\alpha! {I_{\partial R}^\alpha} [2](\partial_4L \, h)- \alpha! {I_{\partial R}^\alpha} [1](\partial_5 L \, h)\, ,
\end{multline}
where $h$ is an arbitrary continuous function. In particular the
above equation holds for $h\equiv 0$ on $\partial R$. For such $h$
the second member of \eqref{eq1} vanishes and by the fundamental
lemma of the calculus of variations we deduce equation
\eqref{NecEquation}. With this result, equation \eqref{eq1} takes the
form
\begin{equation}
\label{eq:basia9}
\begin{array}{ll}
0 &= \displaystyle \int_c^d \partial_4L(b,y,w(b,y),{_aD_b^{\alpha}}[1]w(b,y),{_cD_y^{\alpha}}[2]w(b,y)) \, h(b,y)(d-y)^{\alpha-1}dy\\
  &\quad -\displaystyle \int_c^d \partial_4L(a,y,w(a,y),{_aD_a^{\alpha}}[1]w(a,y),{_cD_y^{\alpha}}[2]w(a,y)) \, h(a,y)(d-y)^{\alpha-1}dy\\
  &\quad -\displaystyle \int_a^b \partial_5L(x,c,w(x,c),{_aD_x^{\alpha}}[1]w(x,c),{_cD_c^{\alpha}}[2]w(x,c)) \, h(x,c)(b-x)^{\alpha-1}dx\\
  &\quad +\displaystyle \int_a^b \partial_5L(x,d,w(x,d),{_aD_x^{\alpha}}[1]w(x,d),{_cD_d^{\alpha}}[2]w(x,d)) \, h(x,d)(b-x)^{\alpha-1}dx\, .
\end{array}
\end{equation}
Since $h$ is an arbitrary function, we can consider the subclass of functions for which $h\equiv0$ on
$$[a,b]\times \{c\}\cup[a,b]\times \{d\}\cup \{b\} \times [c,d].$$
For such $h$ equation \eqref{eq:basia9} reduce to
$$0=\displaystyle \int_c^d \partial_4L(a,y,w(a,y),{_aD_a^{\alpha}}[1]w(a,y),{_cD_y^{\alpha}}[2]w(a,y)) \, h(a,y)(d-y)^{\alpha-1}dy.$$
By the fundamental lemma of calculus of variations, we obtain
$$
\partial_4L(a,y,w(a,y),{_aD_a^{\alpha}}[1]w(a,y),{_cD_y^{\alpha}}[2]w(a,y))=0 \quad \mbox{for all } y \in [c,d]\, .
$$
The other natural boundary conditions are proved similarly, by appropriate
choices of $h$.
\end{proof}

We can generalize Lemma~\ref{lemma} and Theorem~\ref{Theorem1} to
the three dimensional case in the following way.

\begin{Lemma}
\label{lemma2}
Let $A$, $B$, $C$, and $\eta$ be continuous functions whose domains contain the
parallelepiped $W$. If $\eta\equiv 0$ on $\partial W$, then
\begin{equation}
\label{parts3D_2}
{I_{W}^{\alpha}}(A \cdot {_aD_b^{\alpha}}[1]\eta+B  \cdot {_cD_d^{\alpha}}[2]\eta+C \cdot {_eD_f^{\alpha}}[3]\eta)
=-{I_{W}^{\alpha}}( \left[ {_aD_b^{\alpha}}[1]A+ {_cD_d^{\alpha}}[2]B+ {_eD_f^{\alpha}}[3]C\right] \eta )\, .
\end{equation}
\end{Lemma}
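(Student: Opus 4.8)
The plan is to reproduce the two-dimensional reasoning of Lemma~\ref{lemma}, with the fractional Gauss theorem for a parallelepiped (Theorem~\ref{GaussTheo}) now playing the role that Green's theorem for a rectangle played there. First I would apply the flux identity \eqref{Gauss} to the continuous vector field $F=[A\eta,\,B\eta,\,C\eta]$ on $W$, obtaining
$$
({I_{\partial W}^{\alpha}},F)=\frac{1}{\alpha!}{I_{W}^{\alpha}}\,{\mbox{Div}_W^{\alpha}}\big(\eta\,[A,B,C]\big).
$$
Then I would expand the right-hand side using the fractional Leibniz rule for the divergence — property~(i) of Section~\ref{sec:FDVO} — which gives
$$
{\mbox{Div}_W^{\alpha}}\big(\eta\,[A,B,C]\big)
=\big({_aD_b^{\alpha}}[1]A+{_cD_d^{\alpha}}[2]B+{_eD_f^{\alpha}}[3]C\big)\eta
+A\,{_aD_b^{\alpha}}[1]\eta+B\,{_cD_d^{\alpha}}[2]\eta+C\,{_eD_f^{\alpha}}[3]\eta ;
$$
equivalently, one can simply invoke the one–dimensional Leibniz rule componentwise.

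The second step is to observe that the left-hand side $({I_{\partial W}^{\alpha}},F)$ vanishes. Indeed, unwinding the definition of the fractional flux, each of its three surface pieces ${I_{\partial W}^{\alpha}}[2,3]F_x$, ${I_{\partial W}^{\alpha}}[1,3]F_y$, ${I_{\partial W}^{\alpha}}[1,2]F_z$ only samples the corresponding component of $F$ on the faces of $\partial W$ (the planes $x=a,b$, $y=c,d$, $z=e,f$); since $\eta\equiv0$ on $\partial W$, every component $A\eta$, $B\eta$, $C\eta$ is identically zero there, so each surface integral is zero and hence $({I_{\partial W}^{\alpha}},F)=0$. Substituting this and the expansion of the divergence into \eqref{Gauss} and multiplying by $\alpha!$ yields
$$
0={I_{W}^{\alpha}}\Big(A\,{_aD_b^{\alpha}}[1]\eta+B\,{_cD_d^{\alpha}}[2]\eta+C\,{_eD_f^{\alpha}}[3]\eta
+\big[{_aD_b^{\alpha}}[1]A+{_cD_d^{\alpha}}[2]B+{_eD_f^{\alpha}}[3]C\big]\eta\Big),
$$
and regrouping the two families of terms is exactly \eqref{parts3D_2}.

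I expect the only genuinely delicate point to be the vanishing of the boundary flux: one must check carefully, against the explicit formulas for ${I_{\partial W}^{\alpha}}[k_1,k_2]$, that these operators evaluate their integrand only on the six faces of the parallelepiped, so that the hypothesis $\eta\equiv0$ on $\partial W$ really annihilates all boundary contributions (this is the three-dimensional counterpart of ``$h\equiv0$ on $\partial R$'' in Lemma~\ref{lemma}). Everything else is the formal algebra of the Leibniz rule and of rearranging terms, just as in the two-dimensional case, under the standing assumption that all the integrals and fractional derivatives involved exist.
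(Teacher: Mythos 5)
Your proposal is correct and follows exactly the paper's own argument: apply the fractional Gauss theorem \eqref{Gauss} to $F=[\eta A,\eta B,\eta C]$, expand ${\mbox{Div}_W^{\alpha}}$ via the Leibniz rule (property (i)), and kill the boundary flux using $\eta\equiv 0$ on $\partial W$. Your extra remark about verifying that the operators ${I_{\partial W}^{\alpha}}[k_1,k_2]$ only sample the integrand on the six faces is a worthwhile check that the paper leaves implicit, but it does not change the route.
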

\begin{proof}
By choosing $F_x=\eta A$, $F_y=\eta B$, and $F_z=\eta C$ in
\eqref{Gauss}, we obtain the three dimensional analogue of
integrating by parts:
\begin{multline*}
{I_{W}^{\alpha}}(A \cdot {_aD_b^{\alpha}}[1]\eta+B\cdot {_cD_d^{\alpha}}[2]\eta+C\cdot {_eD_f^{\alpha}}[3]\eta)\\
= -{I_{W}^{\alpha}}(\left[ {_aD_b^{\alpha}}[1]A+ {_cD_d^{\alpha}}[2]B+{_eD_f^{\alpha}}[3]C\right] \eta )+\alpha !({I_{\partial W}^{\alpha}},[\eta A,\eta
B,\eta C]).
\end{multline*}
In addition, if we assume that $\eta\equiv 0$ on $\partial W$, we
have formula \eqref{parts3D_2}.
\end{proof}

\begin{Theorem}[Fractional Euler-Lagrange equation for triple integrals]
\label{Theorem3}
Let $w=w(x,y,z)$ be a continuous function whose domain contains $W=[a,b]\times[c,d]\times[e,f]$.
Consider the functional
\begin{equation*}
\begin{split}
J(w)&=I_{W}^\alpha L\left(x,y,z,w(x,y,z),{_aD_x^{\alpha}}[1]w(x,y,z),{_cD_y^{\alpha}}[2]w(x,y,z),{_eD_z^{\alpha}}[3]w(x,y,z)\right)\\
&= \int_a^b\int_c^d\int_e^f
L\left(x,y,z,w,{_aD_x^{\alpha}}[1]w,{_cD_y^{\alpha}}[2]w,{_eD_z^{\alpha}}[3]w\right)\,(dz)^\alpha (dy)^\alpha (dx)^\alpha
\end{split}
\end{equation*}
defined on the set of continuous curves such that their values on
${\partial W}$ take prescribed values. Let $L$ be at least of class $C^1$. If $w$ is a
minimizer (or maximizer) of $J$, then $w$ satisfies the fractional partial differential equation
$$\partial_4 L - {_aD_b^{\alpha}}[1] \partial_5 L - {_cD_d^{\alpha}}[2] \partial_6 L - {_eD_f^{\alpha}}[3] \partial_7 L =0.$$
\end{Theorem}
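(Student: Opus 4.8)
The plan is to mimic exactly the proof of Theorem~\ref{Theorem1}, but in three dimensions, using Lemma~\ref{lemma2} in place of Lemma~\ref{lemma}. First I would take a continuous function $\eta$ on $W$ with $\eta\equiv 0$ on $\partial W$ and form the admissible one-parameter family $w+\epsilon\eta$, which is admissible precisely because it agrees with $w$ on $\partial W$ and hence takes the prescribed boundary values. Setting $j(\epsilon)=J(w+\epsilon\eta)$, the function $j$ is $C^1$ in $\epsilon$ on a neighbourhood of $0$ (here I use that $L\in C^1$ and that the fractional derivatives ${_aD_x^{\alpha}}[1]$, ${_cD_y^{\alpha}}[2]$, ${_eD_z^{\alpha}}[3]$ are linear operators, so that ${_aD_x^{\alpha}}[1](w+\epsilon\eta)={_aD_x^{\alpha}}[1]w+\epsilon\,{_aD_x^{\alpha}}[1]\eta$, and similarly for the other two), so a necessary condition for $w$ to be an extremiser is $j'(0)=0$.

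Next I would differentiate under the integral sign. Since
$$
J(w+\epsilon\eta)=\alpha^3\int_a^b\int_c^d\int_e^f L\bigl(x,y,z,w+\epsilon\eta,{_aD_x^{\alpha}}[1]w+\epsilon\,{_aD_x^{\alpha}}[1]\eta,\ldots\bigr)(b-x)^{\alpha-1}(d-y)^{\alpha-1}(f-z)^{\alpha-1}\,dz\,dy\,dx,
$$
the chain rule gives
$$
j'(0)=\alpha^3\int_a^b\int_c^d\int_e^f\Bigl(\partial_4 L\,\eta+\partial_5 L\,{_aD_x^{\alpha}}[1]\eta+\partial_6 L\,{_cD_y^{\alpha}}[2]\eta+\partial_7 L\,{_eD_z^{\alpha}}[3]\eta\Bigr)(b-x)^{\alpha-1}(d-y)^{\alpha-1}(f-z)^{\alpha-1}\,dz\,dy\,dx=0,
$$
where each $\partial_i L$ is evaluated along $w$. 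Rewriting the integral as $I_W^\alpha$ applied to the bracketed expression (absorbing the weights and the $\alpha^3$), I would then apply Lemma~\ref{lemma2} with $A=\partial_5 L$, $B=\partial_6 L$, $C=\partial_7 L$, and this $\eta$; the hypothesis $\eta\equiv 0$ on $\partial W$ is exactly what Lemma~\ref{lemma2} requires. This moves the fractional derivatives off $\eta$ and produces
$$
0=I_W^\alpha\Bigl(\bigl[\partial_4 L-{_aD_b^{\alpha}}[1]\partial_5 L-{_cD_d^{\alpha}}[2]\partial_6 L-{_eD_f^{\alpha}}[3]\partial_7 L\bigr]\eta\Bigr),
$$
i.e.
$$
\alpha^3\int_a^b\int_c^d\int_e^f\Bigl(\partial_4 L-{_aD_b^{\alpha}}[1]\partial_5 L-{_cD_d^{\alpha}}[2]\partial_6 L-{_eD_f^{\alpha}}[3]\partial_7 L\Bigr)\eta\,(b-x)^{\alpha-1}(d-y)^{\alpha-1}(f-z)^{\alpha-1}\,dz\,dy\,dx=0.
$$

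Finally, since $\eta$ is an arbitrary continuous function vanishing on $\partial W$ and the weight $(b-x)^{\alpha-1}(d-y)^{\alpha-1}(f-z)^{\alpha-1}$ is strictly positive on the open box, the fundamental lemma of the calculus of variations forces the bracketed expression to vanish identically on $W$, which is the claimed fractional Euler-Lagrange equation. I do not anticipate a genuine obstacle: the argument is structurally identical to Theorem~\ref{Theorem1}. The only points needing a little care are the justification of differentiation under the integral sign near $\epsilon=0$ (controlled by continuity of $L$ and its first partials together with continuity of $w$, $\eta$ and their fractional derivatives on the compact set $W$) and the bookkeeping of which argument slot of $L$ each partial $\partial_i L$ refers to; with the convention fixed in Theorem~\ref{Theorem1} this is purely mechanical.
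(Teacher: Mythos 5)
Your proposal is correct and is exactly the argument the paper intends: the published proof is a one-line remark saying to repeat the proof of Theorem~\ref{Theorem1} with Lemma~\ref{lemma2} in place of Lemma~\ref{lemma}, and you have simply written out those details (variation $w+\epsilon\eta$, first variation, fractional integration by parts via Lemma~\ref{lemma2}, fundamental lemma). No discrepancies with the paper's approach.
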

\begin{proof}
A proof can be done similarly to the proof of Theorem~\ref{Theorem1} where instead
of using Lemma~\ref{lemma} we apply Lemma~\ref{lemma2}.
\end{proof}


\section{Applications and possible extensions}
\label{app}

In classical mechanics, functionals that depend on functions of two or more variables
arise in a natural way, \textrm{e.g.}, in mechanical problems involving systems with
infinitely many degrees of freedom (string, membranes, etc.). Let us
consider a flexible elastic string stretched under constant tension
$\tau$ along the $x$ axis with its end points fixed at $x=0$ and
$x=L$. Let us denote the transverse displacement of the particle
at time $t$, $t_1\leq t \leq t_2$,
whose equilibrium position is characterized by its
distance $x$ from the end of the string at $x=0$ by the function
$w=w(x,t)$. Thus $w(x,t)$, with $0\leq x\leq L$, describes the shape of the string
during the course of the vibration. Assume a distribution of mass along
the string of density $\sigma=\sigma(x)$. Then the function that describes
the actual motion of the string is one which renders
$$J(w)=\frac12 \int_{t_1}^{t_2}\int_0^L(\sigma w_t^2-\tau w_x^2)\, dx \, dt$$
an extremum with respect to functions $w(x,t)$ which describe the actual configuration
at $t=t_1$ and $t=t_2$ and which vanish, for all $t$, at $x=0$ and $x=L$
(see \cite[p.~95]{Weinstock} for more details).

We discuss the description of the motion of the string within the
framework of the fractional differential calculus.
One may assume that, due to some constraints of physical nature,
the dynamics do not depend on the usual partial derivatives but on some
fractional derivatives ${_0D_x^{\alpha}}[1]w$ and
${{_{t_1}D}_t^{\alpha}}[2]w$. For example,
we can assume that there is some coarse graining phenomenon --
see details in \cite{ref,Jumarie3b}.
In this condition, one is entitled to
assume again that the actual motion of the system, according to the
principle of Hamilton, is such as to render the action function
$$J(w)=\frac12 I_{R}^\alpha (\sigma \, ({{_{t_1}}D_t^{\alpha}}[2]w)^2
-\tau \, ({_0D_x^{\alpha}}[1]w)^2),$$
where $R=[0,L]\times[t_1,t_2]$, an extremum. Note that
we recover the classical problem of the vibrating
string when $\alpha \rightarrow 1^{-}$. Applying Theorem~\ref{Theorem1}
we obtain the fractional equation of motion for the vibrating string:
$$
{_0D_x^{\alpha}}[1] {_0D_{1}^{\alpha}}[1]w
=\frac{\sigma }{\tau}{{_{t_1}D}_t^{\alpha}}[2]{{_{t_1}D}_{2}^{\alpha}}[2]w\, .
$$
This equation becomes the classical equation of the vibrating string
(\textrm{cf.}, \textrm{e.g.}, \cite[p.~97]{Weinstock}) if $\alpha \rightarrow 1^-$.

We remark that the fractional operators are non-local, therefore they are suitable
for constructing models possessing memory effect. In the above
example, we discussed the application of the fractional differential
calculus to the vibrating string. We started with a variational
formulation of the physical process in which we modify the
Lagrangian density by replacing integer order derivatives with
fractional ones. Then the action integral in the sense of Hamilton
was minimized and the governing equation of the physical process was
obtained in terms of fractional derivatives.
Similarly, many others
physical fields can be derived from a suitably defined action
functional. This gives several possible applications of the fractional
calculus of variations with multiple integrals as was introduced in
this paper, \textrm{e.g.}, in describing non-local properties of
physical systems in mechanics (see, \textrm{e.g.},
\cite{Baleanu4,Carpinteri,Klimek,Rabei2,Tarasov2}) or
electrodynamics (see, \textrm{e.g.}, \cite{Baleanu3,Tarasov}).

We end with some open problems for further investigations.
It has been recognized that fractional calculus is useful
in the study of scaling in physical systems \cite{Cresson:02,Cresson:Gasta:Delfim}.
In particular, there is a direct connection between local fractional differentiability
properties and the dimensions of Holder exponents of nowhere differentiable functions,
which provide a powerful tool to analyze the behavior of irregular signals and functions
on a fractal set \cite{Ric:Del:Hold,Kolwankar}.
Fractional calculus appear naturally, \textrm{e.g.},
when working with fractal sets and coarse-graining spaces
\cite{Jumarie3b,Jumarie5}, and fractal patterns of deformation
and vibration in porous media and heterogeneous materials \cite{[12]}.
The importance of vibrating strings to the fractional
calculus has been given in \cite{Zareba}, where it is shown
that a fractional Brownian motion can be identified with a string.
The usefulness of our fractional theory of the calculus of variations
with multiple integrals in physics, to deal with fractal
and coarse-graining spaces, porous media, and Brownian motions,
are questions to be studied. It should be possible to prove the theorems
obtained in this work for a general form of domains and boundaries;
and to develop a fractional calculus of variations with multiple integrals
in terms of other type of fractional operators. An interesting
open question consists to generalize the fractional Noether-type theorems
obtained in \cite{Atanackovic2,Frederico:Torres1,Frederico:Torres2} to the
case of several independent variables.


\section*{Acknowledgments}

Work supported by the {\it Centre for Research on Optimization and
Control} (CEOC) from the ``Funda\c{c}\~{a}o para a Ci\^{e}ncia e a
Tecnologia'' (FCT), cofinanced by the European Community Fund
FEDER/POCI 2010. Agnieszka Malinowska is also supported by
Bia{\l}ystok University of Technology,
via a project of the Polish Ministry of Science and Higher Education
``Wsparcie miedzynarodowej mobilnosci naukowcow''.

The authors are grateful to an anonymous referee for useful remarks
and references.


{\small

}



\begin{thebibliography}{99}

\bibitem{AGRA}
O. P. Agrawal,
Fractional variational calculus and the transversality conditions,
J. Phys. A {\bf 39} (2006), no.~33, 10375--10384.

\bibitem{[12]}
S. Alexander\ and\ R. Orbach,
Observation of fractons in silica aerogels,
Europhys. Lett. {\bf 6} (1988), 245--250.

\bibitem{Ric:Del:09}
R. Almeida\ and\ D. F. M. Torres,
Calculus of variations with fractional derivatives and fractional integrals,
Appl. Math. Lett. {\bf 22} (2009), no.~12, 1816--1820.
{\tt arXiv:0907.1024}

\bibitem{Ric:Del:Hold}
R. Almeida\ and\ D. F. M. Torres,
H\"olderian variational problems subject to integral constraints,
J. Math. Anal. Appl. {\bf 359} (2009), no.~2, 674--681.
{\tt arXiv:0807.3076}

\bibitem{Almeida}
R. Almeida\ and\ D. F. M. Torres,
Fractional variational calculus for nondifferentiable functions (submitted).

\bibitem{Atanackovic}
T. M. Atanackovi\'c, S. Konjik\ and\ S. Pilipovi\'c,
Variational problems with fractional derivatives: Euler-Lagrange equations,
J. Phys. A {\bf 41} (2008), no.~9, 095201, 12~pp.

\bibitem{Atanackovic2}
T. M. Atanackovi{\'c}, S. Konjik, S. Pilipovi{\'c}\ and\ S. Simi{\'c},
Variational problems with fractional derivatives: invariance conditions
and N\"other's theorem, Nonlinear Anal. {\bf 71} (2009), no.~5-6, 1504--1517.

\bibitem{Baleanu1} D. Baleanu,
Fractional constrained systems and Caputo derivatives,
J. Comput. Nonlinear Dynam. {\bf 3} (2008), no.~2, 199--206.

\bibitem{Baleanu2} D. Baleanu,
New applications of fractional variational principles,
Rep. Math. Phys. {\bf 61} (2008), no.~2, 199--206.

\bibitem{Baleanu3}
D. Baleanu, A. K. Golmankhaneh, A. K. Golmankhaneh\ and\ M. C. Baleanu,
Fractional electromagnetic equations using fractional forms,
Int. J. Theor. Phys. {\bf 48} (2009), 3114--3123.

\bibitem{Baleanu4}
D. Baleanu, A. K. Golmankhaneh, R. Nigmatullin\ and\ A. K. Golmankhaneh,
Fractional Newtonian mechanics, Cent. Eur. J. Phys. {\bf 8} (2010), no.~1, 120--125.

\bibitem{Carpinteri}
A. Carpinteri\ and\ F. Mainardi,
{\it Fractals and fractional calculus in continuum mechanics}, Springer, Vienna, 1997.

\bibitem{Cresson:02}
J. Cresson, Scale relativity theory for one-dimensional
non-differentiable manifolds, Chaos Solitons Fractals {\bf 14} (2002),
no.~4, 553--562.

\bibitem{Cresson} J. Cresson,
Fractional embedding of differential operators and Lagrangian systems,
J. Math. Phys. {\bf 48} (2007), no.~3, 033504, 34~pp.

\bibitem{Cresson:Gasta:Delfim}
J. Cresson, G. S. F. Frederico\ and\ D. F. M. Torres,
Constants of motion for non-differentiable quantum variational problems,
Topol. Methods Nonlinear Anal. {\bf 33} (2009), no.~2, 217--231.
{\tt arXiv:0805.0720}

\bibitem{Dreisigmeyer1}
D. W. Dreisigmeyer\ and\ P. M. Young,
Nonconservative Lagrangian mechanics: a generalized function approach,
J. Phys. A {\bf 36} (2003), no.~30, 8297--8310.

\bibitem{Dreisigmeyer2}
D. W. Dreisigmeyer\ and\ P. M. Young,
Extending Bauer's corollary to fractional derivatives,
J. Phys. A {\bf 37} (2004), no.~11, L117--L121.

\bibitem{Zareba}
K. Dzhaparidze, H. van Zanten\ and\ P. Zareba,
Representations of fractional Brownian motion using vibrating strings,
Stochastic Process. Appl. {\bf 115} (2005), no.~12, 1928--1953.

\bibitem{El-Nabulsi:Torres:2007}
R. A. El-Nabulsi\ and\ D. F. M. Torres,
Necessary optimality conditions for fractional action-like integrals
of variational calculus with Riemann-Liouville derivatives of order $(\alpha,\beta)$,
Math. Methods Appl. Sci. {\bf 30} (2007), no.~15, 1931--1939.
{\tt arXiv:math-ph/0702099}

\bibitem{El-Nabulsi:Torres:2008}
R. A. El-Nabulsi\ and\ D. F. M. Torres,
Fractional actionlike variational problems,
J. Math. Phys. {\bf 49} (2008), no.~5, 053521, 7~pp.
{\tt arXiv:0804.4500}

\bibitem{Frederico:Torres1}
G. S. F. Frederico\ and\ D. F. M. Torres,
A formulation of Noether's theorem for fractional problems of the calculus of variations,
J. Math. Anal. Appl. {\bf 334} (2007), no.~2, 834--846.
{\tt arXiv:math.OC/0701187}

\bibitem{Frederico:Torres2}
G. S. F. Frederico\ and\ D. F. M. Torres,
Fractional conservation laws in optimal control theory,
Nonlinear Dynam. {\bf 53} (2008), no.~3, 215--222.
{\tt arXiv:0711.0609}

\bibitem{Hilfer} R. Hilfer,
{\it Applications of fractional calculus in physics},
World Sci. Publishing, River Edge, NJ, 2000.

\bibitem{Jumarie1} G. Jumarie,
On the representation of fractional Brownian motion as an integral with respect
to $({\rm d}t)\sp a$, Appl. Math. Lett. {\bf 18} (2005), no.~7, 739--748.

\bibitem{Jumarie2} G. Jumarie,
Modified Riemann-Liouville derivative and fractional Taylor series
of nondifferentiable functions further results,
Comput. Math. Appl. {\bf 51} (2006), no.~9-10, 1367--1376.

\bibitem{ref}
G. Jumarie,
Lagrangian mechanics of fractional order, Hamilton-Jacobi fractional
PDE and Taylor's series of nondifferentiable functions,
Chaos Solitons Fractals {\bf 32} (2007), no.~3, 969--987.

\bibitem{Jumarie4} G. Jumarie,
Fractional Hamilton-Jacobi equation for the optimal control
of nonrandom fractional dynamics with fractional cost function,
J. Appl. Math. Comput. {\bf 23} (2007), no.~1-2, 215--228.

\bibitem{Jumarie3} G. Jumarie,
Table of some basic fractional calculus formulae derived from a
modified Riemann-Liouville derivative for non-differentiable functions,
Appl. Math. Lett. {\bf 22} (2009), no.~3, 378--385.

\bibitem{Jumarie3b} G. Jumarie,
An approach via fractional analysis to non-linearity induced
by coarse-graining in space,
Nonlinear Anal. Real World Appl. {\bf 11} (2010), no.~1, 535--546.

\bibitem{Jumarie5} G. Jumarie,
Analysis of the equilibrium positions of nonlinear dynamical systems
in the presence of coarse-graining disturbance in space,
J. Appl. Math. Comput., in press.
DOI: 10.1007/s12190-009-0254-5

\bibitem{Kilbas}
A. A. Kilbas, H. M. Srivastava\ and\ J. J. Trujillo,
{\it Theory and applications of fractional differential equations},
Elsevier, Amsterdam, 2006.

\bibitem{Klimek} M. Klimek,
Lagrangean and Hamiltonian fractional sequential mechanics,
Czechoslovak J. Phys. {\bf 52} (2002), no.~11, 1247--1253.

\bibitem{Kolwankar}
K. M. Kolwankar\ and\ A. D. Gangal,
Holder exponents of irregular signals and local fractional derivatives,
Pramana J. Phys. {\bf 48} (1997), 49--68.

\bibitem{miller} K. S. Miller\ and\ B. Ross,
{\it An introduction to the fractional calculus and fractional differential equations},
Wiley, New York, 1993.

\bibitem{Podlubny} I. Podlubny,
{\it Fractional differential equations},
Academic Press, San Diego, CA, 1999.

\bibitem{Rabei2} E. M. Rabei\ and\ B. S. Ababneh,
Hamilton-Jacobi fractional mechanics,
J. Math. Anal. Appl. {\bf 344} (2008), no.~2, 799--805.

\bibitem{Rabei1}
E. M. Rabei, K. I. Nawafleh, R. S. Hijjawi, S. I. Muslih\ and\ D. Baleanu,
The Hamilton formalism with fractional derivatives,
J. Math. Anal. Appl. {\bf 327} (2007), no.~2, 891--897.

\bibitem{rie} F. Riewe,
Nonconservative Lagrangian and Hamiltonian mechanics,
Phys. Rev. E (3) {\bf 53} (1996), no.~2, 1890--1899.

\bibitem{samko}
S. G. Samko, A. A. Kilbas\ and\ O. I. Marichev,
{\it Fractional integrals and derivatives},
Translated from the 1987 Russian original,
Gordon and Breach, Yverdon, 1993.

\bibitem{Stanislavskya} A. A. Stanislavsky,
Hamiltonian formalism of fractional systems,
Eur. Phys. J. B Condens. Matter Phys. {\bf 49} (2006), no.~1, 93--101.

\bibitem{Tarasov2} V. E. Tarasov,
Fractional variations for dynamical systems:
Hamilton and Lagrange approaches,
J. Phys. A {\bf 39} (2006), no.~26, 8409--8425.

\bibitem{Tarasov} V. E. Tarasov,
Fractional vector calculus and fractional Maxwell's equations,
Ann. Physics {\bf 323} (2008), no.~11, 2756--2778.

\bibitem{Weinstock} R. Weinstock,
{\it Calculus of variations. With applications to physics and engineering},
Reprint of the 1952 edition, Dover, New York, 1974.

\end{thebibliography}
\end{document}